\newcommand{\added}[1]{{#1}}
\newcommand{\C}{\mathbb{C}}
\newcommand{\cS}{\mathcal{S}}
\newcommand{\cP}{\mathcal{P}}
\newcommand{\cR}{\mathcal{R}}
\newcommand{\bF}{{\boldsymbol{F}}}
\newcommand{\bD}{\boldsymbol{D}}
\newcommand{\bJ}{\boldsymbol{J}}
\newcommand{\bP}{\cP}
\newcommand{\br}{\boldsymbol{r}}
\newcommand{\bx}{\boldsymbol{x}}
\newcommand{\balpha}{\boldsymbol{\alpha}}
\newcommand{\bA}{\boldsymbol{A}}
\newcommand{\by}{\boldsymbol{y}}
\newcommand{\be}{\boldsymbol{e}}
\newcommand{\bn}{{\boldsymbol{n}}}
\newcommand{\bG}{\boldsymbol{G}}
\newcommand{\bs}{\boldsymbol{s}}
\newcommand{\bz}{\boldsymbol{z}}
\newcommand{\bw}{\boldsymbol{w}}
\newcommand{\bv}{\boldsymbol{v}}
\newcommand{\Id}{\boldsymbol{Id}}
\newcommand{\bal}{\boldsymbol{\alpha}}
\newcommand{\bvarphi}{\boldsymbol{\varphi}}
\newcommand{\bphi}{\boldsymbol{\phi}}
\newcommand{\bPhi}{\boldsymbol{\Phi}}
\newcommand{\bI}{\boldsymbol{I}}
\newcommand{\supp}{\mathrm{supp}}
\newcommand{\opt}{\mathrm{opt}}
\newcommand{\argmin}{\mathop{\mathrm{argmin}}}
\newtheorem{thm}{Theorem}[section]
\newtheorem{cor}[thm]{Corollary}
\newtheorem{lem}[thm]{Lemma}
\newtheorem{prop}[thm]{Proposition}
\newtheorem{defn}[thm]{Definition}
\numberwithin{equation}{subsection}
\title{A note on practical approximate projection schemes in signal space methods}
\author{Xiaoyi Gu, Deanna Needell and Shenyinying Tu
\thanks{This work was supported by NSF grant DMS-$1045536$, NSF CAREER grant $\#1348721$, and the Alfred P. Sloan Fellowship.}}
\date{\today}
\begin{document}
\maketitle

\begin{abstract}
Compressive sensing (CS) is a new technology which allows the acquisition of signals directly in compressed form, using far fewer measurements than traditional theory dictates.  Recently, many so-called \textit{signal space} methods have been developed to extend this body of work to signals sparse in arbitrary dictionaries rather than orthonormal bases.  In doing so, CS can be utilized in a much broader array of practical settings. 
Often, such approaches often rely on the ability to optimally project a signal onto a small number of dictionary atoms. Such optimal, or even approximate, projections have been difficult to derive theoretically. 
Nonetheless, it has been observed experimentally that conventional CS approaches can be used for such projections, and still provide accurate signal recovery.  In this letter, we summarize the empirical evidence and clearly demonstrate for what signal types certain CS methods may be used as approximate projections.  In addition, we provide theoretical guarantees for such methods for certain sparse signal structures.  Our theoretical results match those observed in experimental studies, and we thus establish both experimentally and theoretically that these CS methods can be used in this context.
\end{abstract}

\section{Introduction}
Compressive sensing (CS) addresses some drawbacks of traditional signal acquisition by suggesting that a signal may be acquired directly in compressed form, and often with far less samples than previously thought.  The key assumption behind this methodology is that signals of interest are \textit{sparse} or nearly sparse.  Typically, this notion can be quantified by assuming that the signal $\bx\in\mathbb{C}^n$ can be represented as $\bx = \bD\balpha$ where $\balpha$ is sparse ($\|\balpha\|_0 := |\supp(\balpha)| = s \ll n$) and $\bD$ is an orthonormal basis called the sparsifying basis.  For example, if $\bD$ is a wavelet basis, most natural images are considered sparse in that basis.  However, in many practical applications, the sparsifying basis is not orthonormal but instead a redundant, highly overcomplete frame.  In this setting, most classical CS approaches break down, and new technologies are needed to address this practical issue.  Recently, both optimization based and greedy methods have begun to be developed; however, many of these approaches rely on the existence of approximately optimal projections to project signals onto sparse subspaces of the frame.  Empirical evidence has suggested that classical CS methods can be used for such projections, but these results have lacked theoretical support.

The mathematical formulation of the CS problem can be described as follows. Denote the signal of interest by $\bx \in \C^n$. The measurement vector $\by \in \C^m$ is obtained using the measurement matrix $\bA \in \C^{m \times n}$ (where $m \ll n$) and written $\by = \bA\bx + \be$, where $\be \in \C^m$ is additive noise.  The sparsity condition on the signal $\bx$ is that $\bx = \bD \bal$, for some coefficient vector $\bal \in \C^d$ with $\|\bal\|_0 \leq k \ll n$, yielding a $k$-sparse representation of the signal $\bx$ with respect to the dictionary $\bD \in \C^{n \times d}$ ($n\leq d$).

In the simplest classical CS setting, the sparsifying basis $\bD$ is just the identity matrix.  In this setting, it is clear that as long as the meausurement matrix $\bA$ is one-to-one on $k$-sparse vectors, then one can recover a $k$-sparse vector $\bx$ from its noiseless measurements $\by = \bA\bx$ by searching for the sparsest vector among all those that match the measurements $\by$.  This 
$\ell_0$-minimization problem of course is NP-hard in general \cite{RefWorks:279}, but motivates the seminal work of Cand\`{e}s, Romberg and Tao \cite{RefWorks:48, RefWorks:47} which shows that a sparse vector $\bx$ can also be recovered using the relaxed $\ell_1$-minimization method, under a slightly stricter assumption on the matrix $\bA$.   
Cand\`es and Tao \cite{RefWorks:48} introduce the Restricted Isometry Property (RIP) which asks that $\bA$ satisfy the following for a constant $\delta_k \in (0,1)$:

\begin{equation}
\label{RIP}
{(1-\delta_k)}\|\bx\|_2^2 \leq \|\bA \bx\|_2^2  \leq {(1+\delta_k)}\|\bx\|_2^2\quad\text{for all $\bx$ with $\|\bx\|_0 \leq k$.}
\end{equation}

Fortunately, it is now well-known that many classes of randomly constructed measurement matrices satisfy this condition.  For example, if the entries of $\bA$ are drawn from a subgaussian distribution and $m\geq Ck/\log (n)$, then $\bA$ satisfies the RIP with high probability \cite{RefWorks:444,RefWorks:285}.  Analogous results hold for subsampled Discrete Fourier Transform (DFT) matrices and others with a fast-multiply \cite{RefWorks:285, rauhut2012restricted}.  
Under the assumption of the RIP, the $\ell_1$-minimization problem is guaranteed to robustly reconstruct a $k$-sparse signal $\bx$ from its noisy measurements $\by=\bA\bx + \be$.
Greedy methods such as OMP~\cite{Paper9, RefWorks:150}, ROMP~\cite{RefWorks:44}, CoSaMP~\cite{NeedeT_CoSaMP}, and IHT~\cite{PaperIHT}, which aim to identify the signal support iteratively, also provide robust reconstruction guarantees in this setting.  We refer the reader to the above references for details about each of the algorithmic guarantees.

\section{Signal Space Methods}

The above mentioned CS methods provide rigorous reconstruction guarantees when the signal of interest $\bx$ is sparse in an orthonormal basis.  Indeed, if we write $\bx=\bD\balpha$ for some sparse coefficient vector $\balpha$, these approaches aim at recovering the representation $\balpha$.  When $\bD$ is orthonormal, this of course translates robustly to the recovery of the actual signal $\bx$.   
However, a vast array of signals in practice are compressible not in an orthonormal basis but in some highly overcomplete dictionary such as an oversampled DFT, Gabor frame, or many of the redundant dictionaries used in signal and image processing.
Due to the effect of redundancy, these classical approaches fail both theoretically and empirically when $\bD$ is no longer orthonormal. 

To address this issue, a recent surge of work has been focused on so-called \textit{signal space} methods.  Two models to capture sparsity in an overcomplete frame have been studied; the synthesis sparsity model asserts that $\bx=\bD\balpha$ for some sparse representation $\balpha$, as discussed above.  The analysis sparsity or \textit{cosparsity} model instead captures the sparsity in the analysis coefficients $\bD^*\bx$ \cite{RefWorks:271,RefWorks:607,foucart15}.  We utilize the synthesis sparsity model in this work, and refer the reader to aforementioned references for a description of some alternatives.

In \cite{RefWorks:23,Paper5}, Davenport et. al. introduce and analyze a greedy method called Signal Space CoSaMP (SSCoSaMP).  SSCoSaMP is an adaptation of the CoSaMP method designed to recover the signal $\bx$ when $\bD$ is an overcomplete frame. Rather than assuming the classical RIP, they instead utilize a generalization of this property called the $D$-RIP, introduced in \cite{RefWorks:60}.   
In contrast to other CS algorithms, SSCoSaMP employs a ``signal-focused" approach, aimed at determining the signal $\bx$ directly rather than its (non-unique) coefficient vector $\balpha$.  The method is described by the pseudo-code given in Algorithm \ref{ccosamp}.  Here and throughout, we denote the range of a matrix $\bD$ by $\cR(\bD)$.

The identification step in SSCoSaMP requires finding the best $k$-sparse representation of a vector $\bz$ in the dictionary $\bD$:
\begin{equation}\label{opt}
\Omega_{\text{opt}} := \argmin_{\Lambda: |\Lambda| = k} \|{\bz - \cP_{\Lambda} \bz}\|_2,
\end{equation}
where here and throughout $\cP_{\Lambda}$ denotes the projection onto the span of the columns of $\bD$ indexed by $\Lambda$.  This problem is itself reminiscent of the classical CS problem; one wishes to recover a sparse representation from an underdetermined linear system.  Thus, such an endeavor in general is itself an NP-hard problem.   
Therefore, we relax this problem and instead ask for a \emph{near-optimal} approximation~\cite{Paper5,giryes2013greedy}, writing $\cS_{\bD}({\bw}, k)$ to denote the $k$-sparse approximation to $\bw$ in $\bD$.   Because of its similarity to the classical CS problem, one might think to use a classical CS approach to solve this sub-problem.  Of course, for typical redundant frames, the RIP will not be in force, and classical results would not suggest accurate recovery. Nonetheless, SSCoSaMP is surprisingly able to accurately recover signals even when $\bD$ is highly overcomplete, when using a classical CS algorithm like OMP, CoSaMP or $\ell_1$-minimization for the near-optimal projection $\cS_{\bD}$.

Even more interesting is that the behavior of SSCoSaMP varies significantly depending on both the approximate projection used, and the structure of the sparse signal $\bx$.  This was originally noted in the experiments of \cite{Paper5}, which we demonstrate again here in Figure \ref{sepvclust} as in \cite{practicalGGKLNT14}.  Here and throughout this paper, the algorithm in parentheses following ``SSCoSaMP" is the algorithm used for the approximate projection $\cS_{\bD}$ in the identify and prune steps.  As is shown, when the support of the signal $\bx$ is clustered together, the CoSaMP method as an approximate projection yields accurate recovery whereas $\ell_1$-minimization and OMP do not perform well at all.  On the other hand, when the signal support has adequate separation, the exact opposite behavior is seen.  This observation led to a recent work which empirically investigated this behavior, cataloging how various approximate projection methods behave for various signal types \cite{practicalGGKLNT14}.

\begin{algorithm}[H]
\caption{Signal-Space CoSaMP (SSCoSaMP)}\label{ccosamp}
\begin{algorithmic}

\STATE \textbf{Input:} $\bA$, $\bD$, $\by$, $k$, stopping criterion 
\STATE \textbf{Initialize:} $\br = \by$, $\bx_0 = 0$, $\ell = 0$, $\Gamma = \varnothing$

\WHILE{not converged}
\STATE
\begin{tabular}{ll}

\textbf{Proxy:} & $\widetilde{\bv} = \bA^*\br$  \\ 
\textbf{Identify:} & $\Omega = \cS_{\bD}(\widetilde{\bv},2k)$ \\
\textbf{Merge:} & $T = \Omega \cup \Gamma$ \\
\textbf{Update:} & $\widetilde{\bw} = \argmin_{z}\|\by - \bA \bz\|_2 \quad   \mathrm{s.t.}  \quad  \bz \in \cR(\bD_T)$	\\
\textbf{Prune:} & $\Gamma = \cS_{\bD}(\widetilde{\bw}, k)$\\
 & $\bx_{\ell+1} = \cP_{\Gamma}\widetilde{\bw}$\\
& $\br = \by - \bA\bx_{\ell + 1}$\\
& $\ell = \ell + 1$\\

\end{tabular}
\ENDWHILE

\STATE \textbf{Output:} $\hat{\bx} = \bx_{\ell}$

\end{algorithmic}
\end{algorithm}

\begin{figure}

\centering
\hspace{-.075\linewidth}
\begin{minipage}{.45\linewidth}
\centering
\begin{tabular}{cc}
\raisebox{25mm}{ {(a)}} &\includegraphics[width=3in]{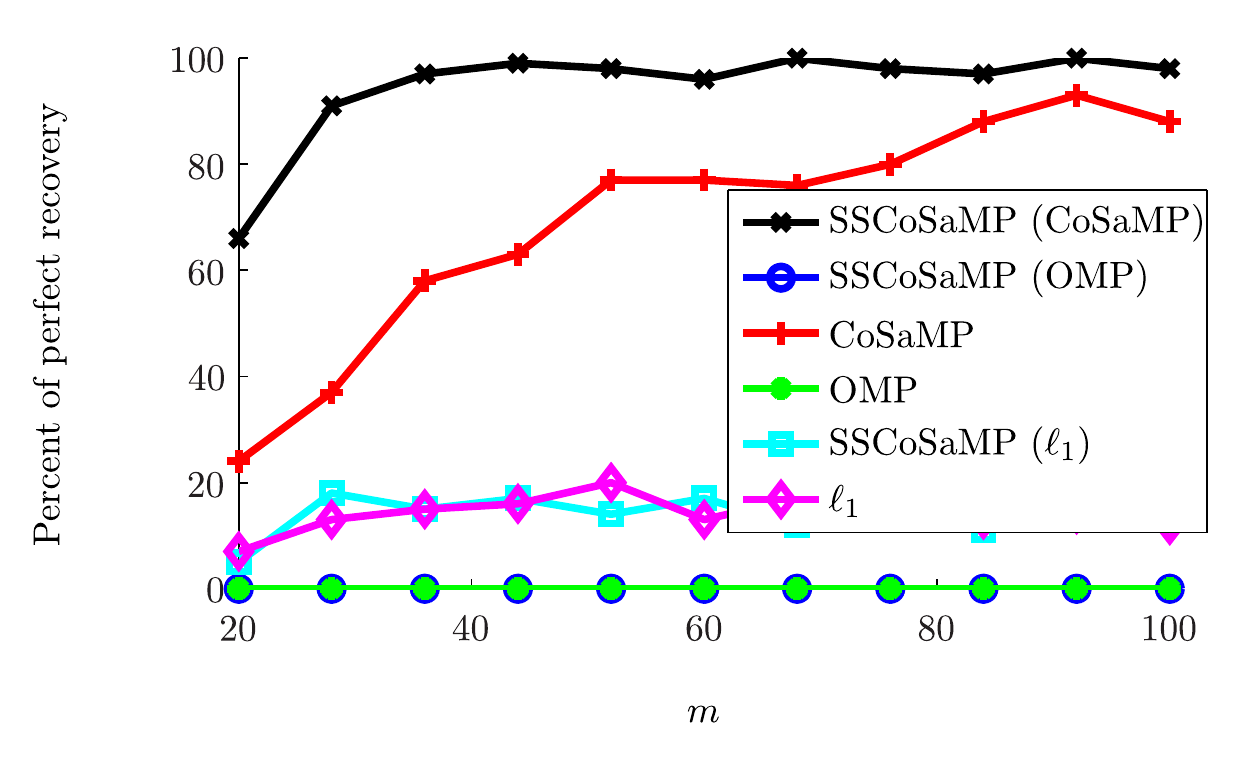}
\end{tabular}
\end{minipage}
\hspace{.05\linewidth}
\begin{minipage}{.45\linewidth}
\centering
\begin{tabular}{cc}
\raisebox{25mm}{ {(b)}} &\includegraphics[width=3in]{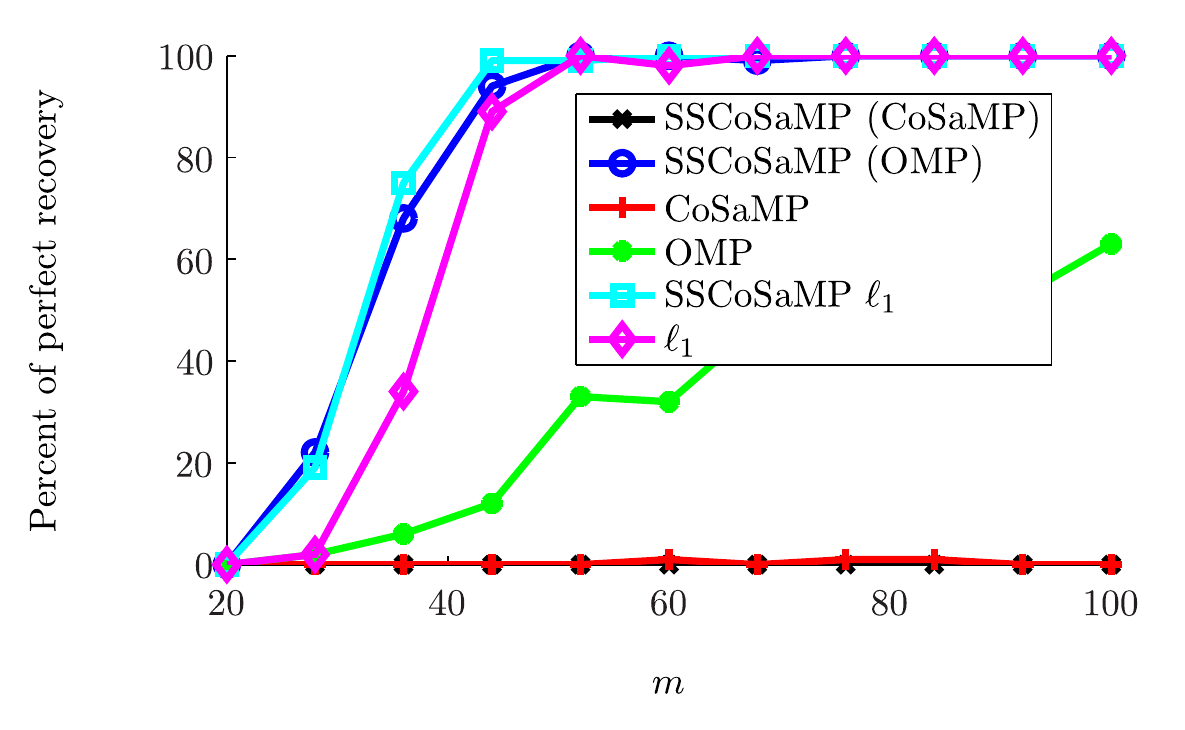}
\end{tabular}
\end{minipage}
   \caption{ From \cite{practicalGGKLNT14}.  Percent of perfect recovery (we define perfect recovery as signal to noise ratio $20\log(\|\bx\|_2/\|\bx-\hat{\bx}\|_2)$ greater than 100db) out of $100$ trials for different SSCoSaMP variants when the nonzero entries in $\balpha$ are clustered together (left) and when the nonzero entries in $\balpha$ are well-separated (right).  Here, $k = 8$, $n = 256$, $d = 1024$, the dictionary $\bD \in \mathbb{C}^{n\times d}$ is a $4\times$ overcomplete DFT and $\bA \in\mathbb{R}^{m\times n}$ is a Gaussian matrix.  
   \label{sepvclust}}
\end{figure}

\subsection{Contribution}
As observed in Figure~\ref{sepvclust} and in more detail in \cite{practicalGGKLNT14}, the accuracy of SSCoSaMP depends on both the approximate projection and the structure of the signal.  In addition, none of the approximate projections simultaneously work well for both signal types (however, see \cite{giryes2013omp,practicalGGKLNT14} for hybrid algorithms that are accurate for wider variety of signal structures).  The discrepancy between the theoretical guarantees for classical CS approaches and the empirical behavior of such methods as approximate projections motivates the work of this note.  Here, we provide an analysis of two of the classical approaches which provably show that for certain signal structures, these methods can be used as accurate approximate projections and thus can be utilized in SSCoSaMP for efficient signal recovery.  We believe our analysis furthers the understanding of the behavior of such methods, which is interesting in its own right as well as in the context of signal space methods.

\section{Analytical Justification}

Recall that \eqref{opt} is NP-hard because it requires examining all $k$ combinations of the columns of $\bD$. A concession one makes is to instead look for a near-optimal projection, as used in Algorithm \ref{ccosamp}. It has been shown \cite{giryes2013greedy} that as long as the near optimal projection is good enough, i.e., 

\begin{equation} \label{eq:eq2}
\|\cP_{\cS_D (x,k)} \bx - \bx\|_2\leq C\|\cP_{\Lambda} \bx - \bx\|_2 \quad\text{and}\quad \|\cP_{\cS_D (x,k)} \bx\|_2 \geq c\|\cP_{\Lambda} \bx \|_2 
\end{equation}
for all $\bx$ and suitable constants $c$ and $C$ (and where $\cP_{\Lambda}$ denotes the optimal projection), then SSCoSaMP provides accurate recovery of the signal.  Although there do exist such projections for well behaved dictionaries $\bD$, for truly redundant dictionaries such projections are not known to exist.  Nonetheless, empirical studies \cite{Paper5} using traditional compressed sensing algorithms for these projections showed that in certain cases SSCoSaMP using these projections still yields accurate recovery.  In this section we build on state of the art results in compressed sensing to provide theoretical justification for this behavior; motivated by Fig. \ref{sepvclust}, we examine the case when SSCoSaMP($\ell_1$) and SSCoSaMP(OMP) are applied to well-separated signals.  We assume here that the dictionary $\bD$ is an overcomplete discrete Fourier matrix (DFT); results for similar dictionaries follow analogously.

\subsection{SSCoSaMP(OMP)}
 Numerical experiments conducted on SSCoSaMP(OMP) suggest that when the sparse vector $\balpha$ in $\bx = \bD \balpha$ is well-separated, SSCoSaMP(OMP) gives accurate recovery of the signal $\bx$.  We thus seek theoretical backing for the use of OMP as an approximate projection within SSCoSaMP in this scenario.  
First, suppose the signal $\bx$ has a $k$-sparse representation in the $n \times d$ matrix $\bPhi$,

\begin{equation} \label{eq:e2}
\bx = \sum_{i\in\Omega_{\opt}} \balpha_{i} \bvarphi_{i}
\end{equation}

 \noindent where $\Omega_{\opt}$ is an index set of size $k$, and $\bvarphi_{i}$ and $\balpha_i$ denote the $i$th column of $\bPhi$ and element of $\balpha$, respectively. Then we can extract the matrix $\bPhi_{\Omega_{\opt}}$  from the dictionary $\bPhi$ whose columns are listed in $\Omega_{\opt}$,

\begin{equation} \label{eq:e3}
\bPhi_{\Omega_{\opt}} = [\bvarphi_{i_1} \, \, \, \bvarphi_{i_2}\,  \cdots \, \bvarphi_{i_k}]_{i_1, \ldots, i_k \in \Omega_{\opt}}.
\end{equation}
 
Then the signal can be expressed as 

\begin{equation} \label{eq:e4}
\bx = \bPhi_{\Omega_{\opt}} \balpha_{\Omega_{\opt}}.
\end{equation}

We will utilize the following established result of 
Cai and Wang \cite{OMPnoise} that improves upon the seminal work of Gilbert and Tropp \cite{Paper9}.  They prove a sufficient condition to accurately recover the signal from contaminated samples when $\bPhi$ is sufficiently incoherent.  Also see these works for a detailed description of OMP.

\begin{prop} \label{minentry} \cite[Prop. 1]{OMPnoise}
Let $\;\Omega_{\opt}$ denote the support set of the signal $\balpha$, set $M := \max_{i \in \Omega_{\opt}^c}\|\bPhi^{\dagger}_{\Omega_{\opt}} \bphi_i\|_1$, let $\br_i$ be the residual vector of OMP in the $i$th iteration, and let $\lambda_{\min}$ denote the minimal eigenvalue of the matrix $\bPhi_{\Omega_{\opt}}^* \bPhi_{\Omega_{\opt}}$.
Suppose $\|\be\|_2 \leq \varepsilon$ and $M < 1$. Then the OMP algorithm (with the stopping rule $\|\br_i\|_2 \leq \varepsilon$) recovers $\Omega_{\opt}$ exactly if all the nonzero coefficients $\balpha_i$ satisfy 

\begin{equation}
|\balpha_i| \geq \frac{2 \varepsilon}{(1- M)\lambda_{\min}}
\end{equation}

\end{prop}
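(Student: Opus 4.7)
The plan is to argue by induction on the OMP iteration $i$: assuming the currently selected support $\Lambda_i$ is contained in $\Omega_{\opt}$, show that iteration $i{+}1$ picks an index in $\Omega_{\opt}\setminus\Lambda_i$, and that after $|\Omega_{\opt}|=k$ iterations the stopping rule $\|\br_i\|_2\le\varepsilon$ triggers. The engine of the induction is Tropp's exact recovery condition cast in terms of $M$, combined with a careful noise bookkeeping that produces the stated threshold on $|\balpha_i|$.

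First I would decompose the residual. Let $P_i$ be the orthogonal projection onto $\mathrm{range}(\bPhi_{\Lambda_i})$ and write
\begin{equation*}
\br_i \;=\; (I-P_i)\by \;=\; (I-P_i)\bPhi_{\Omega_{\opt}\setminus\Lambda_i}\,\balpha_{\Omega_{\opt}\setminus\Lambda_i} \;+\; (I-P_i)\be .
\end{equation*}
The OMP greedy rule selects $\arg\max_j|\langle\br_i,\bphi_j\rangle|$, so exact recovery in this step amounts to
\begin{equation*}
\max_{j\in\Omega_{\opt}^c}|\langle\br_i,\bphi_j\rangle| \;<\; \max_{j\in\Omega_{\opt}\setminus\Lambda_i}|\langle\br_i,\bphi_j\rangle|.
\end{equation*}
For the noiseless part I would use the identity $\bPhi_{\Omega_{\opt}^c}^*\bPhi_{\Omega_{\opt}} = \bPhi_{\Omega_{\opt}^c}^*\bPhi_{\Omega_{\opt}}(\bPhi_{\Omega_{\opt}}^*\bPhi_{\Omega_{\opt}})^{-1}\bPhi_{\Omega_{\opt}}^*\bPhi_{\Omega_{\opt}}$ together with the definition of $M$ to show that the $\ell_\infty$ mass of the signal-part correlations on $\Omega_{\opt}^c$ is at most $M$ times the $\ell_\infty$ mass on $\Omega_{\opt}\setminus\Lambda_i$. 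This gives a multiplicative margin of $(1-M)$ in favor of correct indices, which is exactly Tropp's ERC in this setup.

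Second, I would bound the noise contribution. Because columns of $\bPhi$ are unit-norm and $I-P_i$ is a projection, $|\langle(I-P_i)\be,\bphi_j\rangle|\le\|\be\|_2\le\varepsilon$ for every $j$. Combining with the ERC margin yields the requirement
\begin{equation*}
(1-M)\,\max_{j\in\Omega_{\opt}\setminus\Lambda_i}|\langle(I-P_i)\bPhi_{\Omega_{\opt}\setminus\Lambda_i}\balpha_{\Omega_{\opt}\setminus\Lambda_i},\bphi_j\rangle| \;>\; 2\varepsilon.
\end{equation*}
To convert the left-hand side into a condition on $|\balpha_i|$, I would use the fact that the signal inner products on $\Omega_{\opt}\setminus\Lambda_i$ are, up to a factor determined by the Gram matrix on the active set, comparable to the coefficients themselves. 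Concretely, $\|\bPhi_{\Omega_{\opt}}^*(I-P_i)\bPhi_{\Omega_{\opt}}\balpha\|_\infty$ can be lower bounded by $\lambda_{\min}\,\|\balpha_{\Omega_{\opt}\setminus\Lambda_i}\|_\infty$ (after restricting to the complementary block), which is where $\lambda_{\min}$ enters. Pushing this through gives exactly $(1-M)\lambda_{\min}|\balpha_i|/2 > \varepsilon$, i.e.\ the stated threshold.

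The main obstacle I anticipate is this last algebraic step: pinning the constant down to $(1-M)\lambda_{\min}$ with no slack requires the correct conditioning argument on the Schur complement of $\bPhi_{\Omega_{\opt}}^*\bPhi_{\Omega_{\opt}}$ with respect to $\Lambda_i$, rather than just $\|(\bPhi_{\Omega_{\opt}}^*\bPhi_{\Omega_{\opt}})^{-1}\|_{2\to 2}=1/\lambda_{\min}$; doing this cleanly is what distinguishes Cai--Wang's sharpened bound from the cruder Gilbert--Tropp estimate. Once the inductive step is established, termination is immediate: after $k$ correct selections $\Lambda_k=\Omega_{\opt}$, so $\br_k=(I-P_k)\be$ satisfies $\|\br_k\|_2\le\varepsilon$ and the algorithm halts, yielding $\widehat{\Omega}=\Omega_{\opt}$ as claimed.
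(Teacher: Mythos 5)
Your architecture is the right one and is essentially the same as the paper's proof of its Theorem 3.5 (which explicitly follows Cai--Wang's proof of this proposition): induct on the iteration, split $\br_i$ into a signal part and a noise part, use the ERC quantity $M$ to obtain a $(1-M)$ margin for the signal correlations, bound each noise correlation by $\varepsilon$ via unit-norm columns, and bring in $\lambda_{\min}$ through the Schur-complement eigenvalue monotonicity $\lambda_{\min}\bigl(\bPhi_{\Omega_{\opt}}^*(\bI-P_i)\bPhi_{\Omega_{\opt}}\bigr)\ge\lambda_{\min}\bigl(\bPhi_{\Omega_{\opt}}^*\bPhi_{\Omega_{\opt}}\bigr)$. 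But the step you flag as the main obstacle is in fact stated incorrectly: the inequality $\|\bPhi_{\Omega_{\opt}}^*(\bI-P_i)\bPhi_{\Omega_{\opt}}\balpha\|_\infty \ge \lambda_{\min}\|\balpha_{\Omega_{\opt}\setminus\Lambda_i}\|_\infty$ is false for a general positive definite Gram block, since $\|Av\|_\infty\ge\lambda_{\min}(A)\|v\|_\infty$ already fails in dimension two (take $A=\lambda\bI+ww^*$ with $w=(\epsilon,1)$ and $v=(1,-2\epsilon)$, for which $\|Av\|_\infty=\lambda-\epsilon^2<\lambda\|v\|_\infty$). What the argument needs, and what Cai--Wang actually prove, is the bound against the \emph{minimum} magnitude: pass through $\ell_2$, writing $\|Av\|_\infty\ge\|Av\|_2/\sqrt{k-t}\ge\lambda_{\min}\|v\|_2/\sqrt{k-t}$ and then $\|v\|_2\ge\sqrt{k-t}\,\min_j|v_j|$, so the $\sqrt{k-t}$ factors cancel and you land on $\lambda_{\min}\min_j|\balpha_j|$, which is exactly the stated threshold. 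Your final constant is right, but the inequality as written would not survive scrutiny.

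The second gap is that you never rule out early termination. With the stopping rule $\|\br_i\|_2\le\varepsilon$ it is not enough that the rule fires once $\Lambda_k=\Omega_{\opt}$; you must also show it does \emph{not} fire while $t<k$, since otherwise OMP halts with a strict subset of $\Omega_{\opt}$ and "recovers $\Omega_{\opt}$ exactly" fails. The paper closes this by showing that, under the same magnitude condition, $\|\br_t\|_2\ge\|(\bI-P_t)\bPhi_{\Omega_{t}^{\star}}\balpha_{\Omega_{t}^{\star}}\|_2-\varepsilon\ge\lambda_{\min}\|\balpha_{\Omega_{t}^{\star}}\|_2-\varepsilon>\varepsilon$ for all $t<k$. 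This piece must be added for the proposition as stated.
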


In our setting, $\bD=\bPhi$ is not incoherent, but we want to show that when the support of $\balpha$ is well-separated, then $\max_{i\in \Omega_{\opt}^c} \| \bPhi_{\Omega_{\opt}}^{\dag} \bphi_i\|_1 \leq 1$. From now on, we take $\bD$ to be an $n \times d$ ($n \leq d$) overcomplete DFT matrix (having unit-norm columns), but the technique can be extended to other overcomplete dictionaries.  Recall that if $\bD$ is an $n \times d$ overcomplete DFT dictionary, then the entries of $\bD$ obey (with our normalization):

\begin{equation} \label{eq:e10}
\bD_{jk} = \frac{\omega^{jk}}{\sqrt{n}}_{j = 0, 1, ... n-1, k = 0, 1, ... d-1}, \quad\text{where}\quad \omega = \be^{- \frac{2 \pi i }{d}}.
\end{equation}

We will first define a separation condition that will be needed to guarantee accurate recovery.

\begin{defn}[Well-separated]
We say that $\;\Omega_{\opt}$ is \textnormal{well-separated} when $\bD_{\Omega_{\opt}}^*\bD_{\Omega_{\opt}}$ is strictly diagonally dominant.  Recall that we may define for a $k\times k$ matrix $\bJ$,
\begin{equation}\label{diagdom}
\Delta_i(\bJ) := \bJ_{ii} - \sum_{j \neq i} |\bJ_{ij}|,
\end{equation}
and then say $\bJ$ is strictly diagonally dominant if $\Delta_i(\bJ) > 0 $ for $1 \leq i \leq k$.  We refer to a signal $\balpha$ as well-separated when its support set $\Omega_{\opt}$ satisfies this property.
\end{defn}

Note that once the minimum separation\footnote{Note that we measure distance in the DFT dictionary cyclically, so that column $d$ and column $1$ are $1$ column apart.} between any two non-zero elements of $\balpha$, denoted $h_{\min}$, is large enough, $\Omega_{\opt}$ is well-separated.  Indeed, this is easily due to the fact that the gram matrix of the dictionary $\bD$ has quickly decaying off-diagonal terms (see Fig. \ref{gram}).   We can quantify this notion by the following definition.

\begin{center}
\begin{figure}
\begin{center}
\includegraphics[height=1.8in]{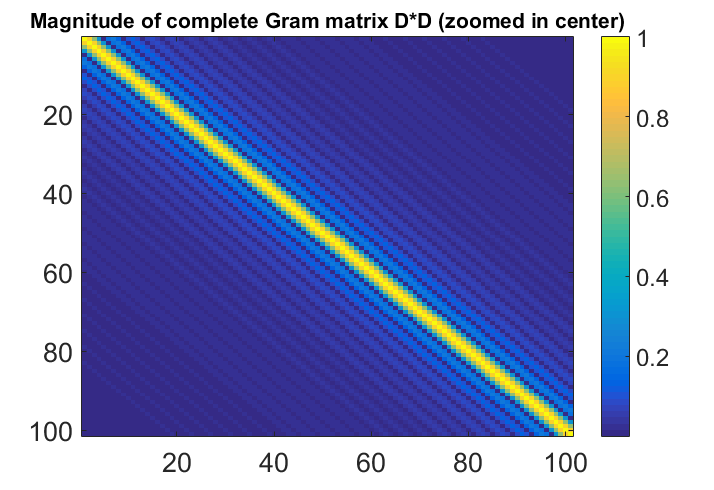}\quad\includegraphics[height=1.8in]{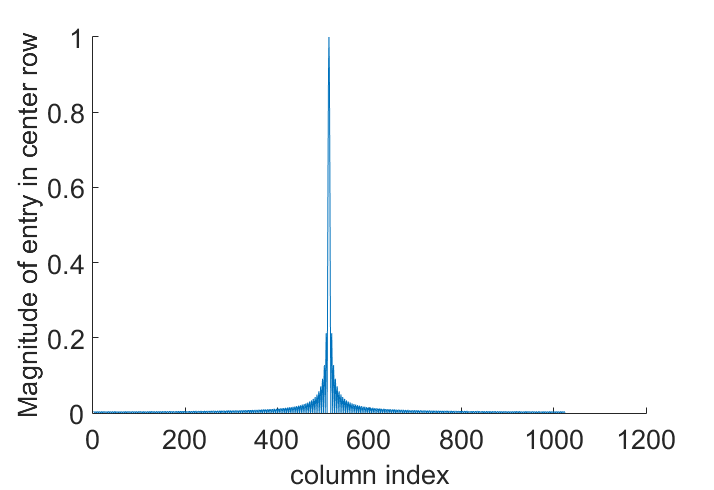}
\caption{Left: Magnitudes at center of gram matrix $\bD^*\bD$.  Right: Magnitudes in center row.}\label{gram}
\end{center}
\end{figure}
\end{center}

\added{
\begin{defn}[Dominance factor]
For an $n\times d$ DFT dictionary $\bD$, let $\cS_{k, h_{\min}}$ denote the set of all support sets $\Omega$ of size $k$ that have a minimum separation of at least $h_{\min}$:
$$
\cS_{k, h_{\min}} = \left\{\Omega : |\Omega| = k, \max_{p,q\in\Omega, p>q}\min(p-q, d-p+q) \geq h_{\min}\right\}.
$$
We define the \textnormal{dominance factor} $\eta_{h_{\min}, k}$ as the worst possible sum of off diagonals $\sum_{q \neq p} |\bG_{pq}|$ for $\bG = \bD_{\Omega}^*\bD_{\Omega}$ when $\Omega$ has $k$ entries and minimum separation $h_{\min}$.  That is,
\begin{equation}\label{defdom}
\eta_{h_{\min}, k} := \max_{\Omega\in\cS_{k, h_{\min}}}\max_{p\in\Omega} \sum_{q\in\Omega, q \neq p} \left|(\bD^*\bD)_{pq} \right|.
\end{equation}
We can also use this quantity to bound the correlations between atoms in a support set and an outside atom.  We may define
\begin{equation}\label{defdom2}
\eta_{h_{\min}, k}' := \max_{\Omega\in\cS_{k, h_{\min}}}\max_{p\notin\Omega} \sum_{q\in\Omega} \left|(\bD^*\bD)_{pq} \right|.
\end{equation}
\end{defn} 
}

With this notation, one observes that for a fixed sparsity level $k$, if $\eta_{h_{\min}, k} < (\bD^*\bD)_{qq}$ (where with our normalization $(\bD^*\bD)_{qq} = 1$) then any support set $\Omega_{\opt}$ of size $k$ and minimum separation $h_{\min}$ is guaranteed to satisfy the well-separated condition.  In this sense, utilizing these quantities will yield ``worst-case'' bounds, where we fix $h_{\min}$ and provide bounds for the worst possible support set.  Our bounds will thus be highly pessimistic but can be presented in a more simple form, depending only on $h_{\min}$.  We also remark that $\eta_{h_{\min}, k} \leq \eta_{h_{\min}, k}'$ but that $\eta_{h_{\min}, k} \approx \eta_{h_{\min}, k}'$ as the redundancy in the dictionary grows large.  Nonetheless, we treat these two quantities separately since for some dictionaries they may be significantly different.  

Figure \ref{diag_plot} shows numerically how separated the support of the signal needs to be to satisfy the well-separated condition\footnote{To obtain a crude upper bound on $\eta_{h_{\min},k}$ we simply assume that $\Omega_{\opt}$ contains equally separated non-zero elements, a distance $h_{\min}$ apart and sum the coherence between those atoms and the worst off-support column.  For that reason, when this bound is less than one, we guarantee that {any} support set with separation $h_{\min}$ is well-separated.}.  This again confirms that as long as the minimum separation $h_{\min}$ is large enough, the well-separated property holds.  Of course note that this computation verifies the minimum separation required for the \textit{worst case} support with that separation, whereas there may be many other support sets with smaller minimum separation which still guarantee the well-separated property.  Nonetheless, we will continue to utilize this quantity to bound the performance of OMP so that we may state the results only in terms of the separation $h_{\min}$.  

We also plot a (crude) upper bound\footnote{To bound $\eta_{h_{\min}, k}'$, we first upper bound the coherence between two columns separated by a distance $h$ by $f(h) := 1/n \cdot csc|h\pi/d|$.  Then one observes that because $f(h)$ is convex, the maximum in \eqref{defdom2} is attained when column $p$ is a neighbor of an element in $\Omega_{\opt}$. Hence $\eta_{h_{\min}, k}'$ is bounded by the sum of the coherence between columns in $\Omega_{\opt}$ with respect to such a column p. The bound can then be written formally as $f(1) + \sum_{j = 1}^{r} f( jh+1 ) + f( jh-1)$  where $r = \lfloor(k+1)/2\rfloor$.} on $\eta_{h_{\min}, k}'$ in Figure \ref{diag_plot}.  We will see later that we also want this quantity to be smaller than one.  

\begin{figure}
\includegraphics[width=3in]{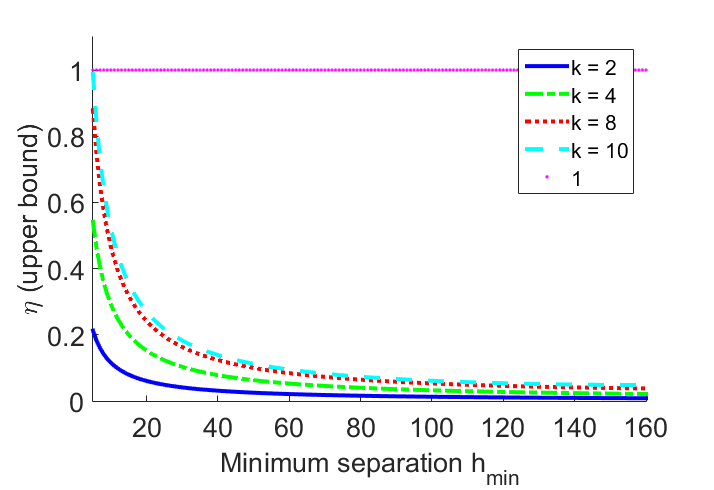}\quad \includegraphics[width=3in]{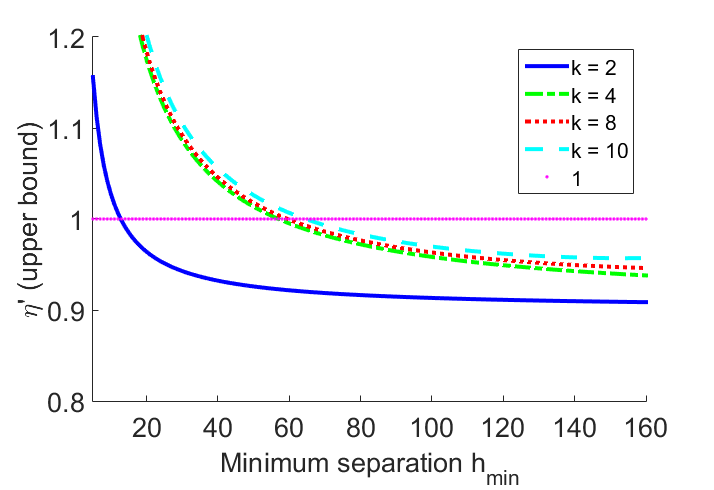}
\caption{Dictionary $\bD$ is $256\times 1024$ overcomplete DFT. Left: upper bound on the dominance factor $\eta_{h_{\min}, k}$ for support size $k$ and minimum separation $h_{\min}$.  Horizontal line shows value of diagonal entry. When curve falls below horizontal line the well-separated property is guaranteed to hold.  Right: upper bound on $\eta_{h_{\min}, k}'$ as a function of $h_{\min}$.}\label{diag_plot}
\end{figure}

Finally, we will utilize the following lemma, which bounds the restricted isometry constant when the signal has a well-separated support $\Omega_{\opt}$.

\begin{lem}
\label{D_RIP}
$\bD$ satisfies the following variant of the Restricted Isometry Property \eqref{RIP} with parameters ($k, \delta_k$):
\begin{equation}\label{ripbound}
(1-\delta_k)\|\balpha\|_2^2 \leq \|\bD\balpha\|_2^2 \leq (1+\delta_k)\|\balpha\|_2^2
\end{equation}
 for any well-separated $k$-sparse vector $\balpha$. In particular, if the minimum separation distance between the nonzeros of $\balpha$ is denoted by $h_{\min}$, then 

\begin{equation} \label{eq:e6}
\delta_k \leq \eta_{h_{\min}, k},
\end{equation}
where $\eta_{h_{\min}, k}$ is defined in \eqref{defdom}.
\end{lem}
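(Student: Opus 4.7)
The plan is to reduce the claim to a statement about the Gram matrix $\bG := \bD_{\Omega_{\opt}}^*\bD_{\Omega_{\opt}}$ indexed by the support $\Omega_{\opt}$ of $\balpha$, and then bound its deviation from the identity using the quantity $\eta_{h_{\min}, k}$ from \eqref{defdom}.

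First I would observe that since $\balpha$ is supported on $\Omega_{\opt}$, we can write
\begin{equation*}
\|\bD\balpha\|_2^2 = \balpha_{\Omega_{\opt}}^*\, \bD_{\Omega_{\opt}}^*\bD_{\Omega_{\opt}}\, \balpha_{\Omega_{\opt}} = \balpha_{\Omega_{\opt}}^*\, \bG\, \balpha_{\Omega_{\opt}}.
\end{equation*}
Because $\bD$ is a unit-norm DFT dictionary, the diagonal entries of $\bG$ all equal $1$, so $\bG = \bI + \bE$ where $\bE$ is the off-diagonal part. The identity $\balpha_{\Omega_{\opt}}^*\bI\balpha_{\Omega_{\opt}} = \|\balpha\|_2^2$ reduces the claim to showing that $|\balpha_{\Omega_{\opt}}^*\bE\,\balpha_{\Omega_{\opt}}| \leq \eta_{h_{\min},k}\|\balpha\|_2^2$.

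Next, I would invoke a Gershgorin-style bound on the operator norm of the Hermitian matrix $\bE$. Its operator norm is at most its maximum absolute row sum, and by the very definition \eqref{defdom}, for any support $\Omega_{\opt}\in\cS_{k,h_{\min}}$ we have
\begin{equation*}
\max_{p\in\Omega_{\opt}} \sum_{q\in\Omega_{\opt},\; q\neq p} |(\bD^*\bD)_{pq}| \;\leq\; \eta_{h_{\min},k}.
\end{equation*}
Thus $\|\bE\|_{\mathrm{op}} \leq \eta_{h_{\min},k}$, so $|\balpha_{\Omega_{\opt}}^*\bE\,\balpha_{\Omega_{\opt}}|\leq \eta_{h_{\min},k}\|\balpha\|_2^2$ by the standard inequality for Hermitian quadratic forms. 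Combining with the diagonal contribution yields \eqref{ripbound} with $\delta_k = \eta_{h_{\min},k}$, which is precisely \eqref{eq:e6}.

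I do not anticipate a substantial obstacle here; the argument is essentially a Gershgorin/Rayleigh-quotient bound once the definition of $\eta_{h_{\min},k}$ is unpacked. The only delicate point worth highlighting in the write-up is that the bound is uniform over all well-separated $k$-sparse supports (not just a fixed one), which is exactly why the maximum over $\Omega\in\cS_{k,h_{\min}}$ appears in \eqref{defdom}. Everything else reduces to the observation that $\bD$ has unit-norm columns and that the quadratic form of a Hermitian perturbation is controlled by its row-sum norm.
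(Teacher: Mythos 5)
Your proposal is correct and follows essentially the same route as the paper: both reduce the claim to controlling the deviation of the Gram matrix $\bG = \bD_{\Omega_{\opt}}^*\bD_{\Omega_{\opt}}$ from the identity via a Gershgorin-type maximum-row-sum bound on the off-diagonal part, which is exactly what \eqref{defdom} controls. The only cosmetic difference is that the paper states this as an eigenvalue bound on $\bG$ and identifies $\|\bG - \Id\|_2$ with $\sup_{\|\balpha\|_2=1}\bigl|\,\|\bD\balpha\|_2^2 - \|\balpha\|_2^2\bigr|$, while you phrase it as a Rayleigh-quotient bound on the Hermitian perturbation $\bE = \bG - \bI$; these are the same argument.
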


\begin{proof}
Fix a signal $\balpha$ with well-separated support $\Omega = \Omega_{\opt}$, set $ \bG := \bD_{\Omega}^{*} \bD_{\Omega}$ and let $g_{pq}$ be the entries of $\bG$. According to the Gershgorin circle theorem \cite{gershgorin1931uber}, for every singular value $\lambda_i$ of $\bG$, we have 

\begin{equation} \label{eq:p1}
1-R_{p} \leq \lambda_i \leq 1+R_{p}
\end{equation}

\noindent for some $p\in\Omega$, where $R_{p} = \sum_{q\neq p} |g_{pq}| $ is the radius of the Gershgorin disc.
By definition, 

\begin{equation} \label{eq:p2}
\|\bG - \Id\|_{2} = \max_i(\lambda_i -1) \leq \max_{p \in \Omega} R_{p} = \max_{p \in \Omega} \sum_{q \in \Omega, q \neq p} |g_{pq}|.
\end{equation}

Since we know the minimum distance (with respect to the full dictionary $\bD$) between any two columns in $\bD_{\Omega_{\opt}}$ is $h_{\min}$, then we have by the definition \eqref{defdom} that 

\begin{equation} \label{eq:p3}
\|\bG - \Id\|_{2} = \max_{p \in \Omega} \sum_{q \in \Omega, q \neq p} |g_{pq}| \leq \eta_{h_{\min}, k}. 
\end{equation}

Lastly, the following observation proves the claim: 

\begin{equation}
\|\bG - \Id\|_{2}  = \sup_{\|\balpha\|_2 = 1} |\langle (\bD_{\Omega_{opt}}^*\bD_{\Omega_{opt}}  - \bI d)\, \balpha, \balpha \rangle| = \sup_{\|\balpha\|_2 = 1}\left|\, \|\bD \balpha\|_2^2 - \|\balpha\|_2^2\right|.
\end{equation}

\end{proof}

\subsubsection{Guarantees for SSCoSaMP(OMP)}

We consider here the case when the measurements are corrupted with noise, allowing the noiseless case to follow as a special instance when the noise has norm zero.  If our signal is contaminated by noise, then \eqref{eq:e2} becomes 

\begin{equation} \label{eq:n1}
\bx = \sum_{i\in\Omega_{opt}} \balpha_{i} \bvarphi_{i} + \be, ~~ \|\be\|_{2} \leq \varepsilon.
\end{equation}

We are now prepared to state our main result, which guarantees accurate recovery of well-separated signals under modest amounts of noise.

\begin{thm}
\label{OMP_withnoise}

Suppose that $\|\be\|_{2} < \varepsilon$, and 

\begin{equation} \label{eq:corollary_assumption}
B(h_{\min}):= \frac{\eta'_{h_{\min},k}}{1 - \eta_{h_{\min},k}} < 1,
\end{equation}
where both terms are defined in \eqref{defdom} and \eqref{defdom2}.
\noindent Then OMP (with the stopping rule $\|\br_i\|_2 \leq \varepsilon$) will exactly recover $\Omega_{\opt}$ if the minimum magnitude of the nonzero elements of the signal $\balpha$ satisfies

\begin{equation} \label{eq:n3}
\min_{i \in \Omega_{opt}} |\balpha_i|  \geq \frac{2\varepsilon}{1-\eta_{h_{\min},k}-\eta'_{h_{\min},k}}.
\end{equation}
\end{thm}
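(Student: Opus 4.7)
The plan is to invoke Proposition~\ref{minentry} directly, with the dictionary there being our overcomplete DFT $\bD$. That proposition requires two ingredients: an upper bound $M < 1$ on
$$M := \max_{i \in \Omega_{\opt}^c}\|\bD_{\Omega_{\opt}}^{\dagger}\bphi_i\|_1,$$
together with a lower bound on $\lambda_{\min}(\bG)$ where $\bG := \bD_{\Omega_{\opt}}^*\bD_{\Omega_{\opt}}$. I will show that $M \leq B(h_{\min})$ and $\lambda_{\min}(\bG) \geq 1 - \eta_{h_{\min},k}$; plugged into the denominator $(1-M)\lambda_{\min}$ of Proposition~\ref{minentry}'s threshold, these estimates collapse to $1 - \eta_{h_{\min},k} - \eta'_{h_{\min},k}$, matching the denominator in \eqref{eq:n3} exactly.

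The eigenvalue bound is already implicit in the proof of Lemma~\ref{D_RIP}: Gershgorin applied to $\bG$ together with \eqref{defdom} places every eigenvalue of $\bG$ in the interval $[1 - \eta_{h_{\min},k},\; 1 + \eta_{h_{\min},k}]$, so in particular $\bG$ is invertible, since $\eta_{h_{\min},k} < 1$ under hypothesis \eqref{eq:corollary_assumption}. For the bound on $M$, the idea is to write $\bG = \bI + \bE$ with $\bE$ the off-diagonal part of $\bG$, so that $\|\bE\|_{\infty \to \infty} \leq \eta_{h_{\min},k} < 1$ by definition \eqref{defdom}. A Neumann series expansion then gives $\|\bG^{-1}\|_{\infty \to \infty} \leq (1 - \eta_{h_{\min},k})^{-1}$. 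Because $\bG$, and hence $\bG^{-1}$, is Hermitian, the induced $1$-to-$1$ operator norm equals the induced $\infty$-to-$\infty$ norm, yielding the same bound $\|\bG^{-1}\|_{1\to 1} \leq (1 - \eta_{h_{\min},k})^{-1}$. Factoring the pseudoinverse as $\bD_{\Omega_{\opt}}^{\dagger} = \bG^{-1}\bD_{\Omega_{\opt}}^*$ and noting that $\|\bD_{\Omega_{\opt}}^*\bphi_i\|_1 = \sum_{q \in \Omega_{\opt}}|(\bD^*\bD)_{qi}| \leq \eta'_{h_{\min},k}$ by \eqref{defdom2} for every $i \notin \Omega_{\opt}$, I conclude $M \leq \eta'_{h_{\min},k}/(1 - \eta_{h_{\min},k}) = B(h_{\min}) < 1$.

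The threshold on $|\balpha_i|$ in Proposition~\ref{minentry} then becomes at most $\frac{2\varepsilon}{(1 - B(h_{\min}))(1 - \eta_{h_{\min},k})} = \frac{2\varepsilon}{1 - \eta_{h_{\min},k} - \eta'_{h_{\min},k}}$, which is precisely the right-hand side of \eqref{eq:n3}. The main technical point, and the only step beyond a direct application of Gershgorin, is the passage from $\|\bG^{-1}\|_{\infty \to \infty}$ to $\|\bG^{-1}\|_{1\to 1}$, which is what couples the row-sum estimate $\eta_{h_{\min},k}$ (used to control $\bG^{-1}$) with the column-sum estimate $\eta'_{h_{\min},k}$ (used to control $\bD_{\Omega_{\opt}}^*\bphi_i$). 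Hermitian symmetry of $\bG^{-1}$ makes this transition free of charge, but it is the observation that binds the two dominance factors into the single ratio $B(h_{\min})$ appearing in the hypothesis.
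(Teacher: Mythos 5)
Your proof is correct, and it reaches the theorem by a genuinely different route than the paper. You treat Proposition \ref{minentry} as a black box and verify its two hypotheses directly: the spectral bound $\lambda_{\min}(\bD_{\Omega_{\opt}}^*\bD_{\Omega_{\opt}}) \geq 1-\eta_{h_{\min},k}$ (the same Gershgorin argument as Lemma \ref{D_RIP}), and the exact-recovery-coefficient bound $M = \max_{i\in\Omega_{\opt}^c}\|\bD_{\Omega_{\opt}}^{\dagger}\bphi_i\|_1 \leq \eta'_{h_{\min},k}/(1-\eta_{h_{\min},k}) = B(h_{\min})$, obtained from the factorization $\bD_{\Omega_{\opt}}^{\dagger} = \bG^{-1}\bD_{\Omega_{\opt}}^*$, a Neumann-series bound on $\|\bG^{-1}\|_{1\to 1}$, and Hermitian symmetry to pass between row sums and column sums; this is Tropp's cumulative-coherence bound on the exact recovery coefficient, specialized to the separation-restricted quantities $\eta$ and $\eta'$. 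The paper instead unrolls the induction underlying the Cai--Wang proof: it tracks the residual $\br_t = \bs_t + \bn_t$ iteration by iteration, cites a lemma of Tropp to get $M_t' < B(h_{\min})\,M_t$ and a lemma of Cai--Wang to compare $\lambda_{\min}$ of the deflated Gram matrix $\bD_{\Omega_t^{\star}}^*(\bI-\bP_t)\bD_{\Omega_t^{\star}}$ with that of $\bG$, and separately re-verifies that the stopping rule $\|\br_t\|_2\leq\varepsilon$ does not fire early. Your version is shorter and more modular (the stopping-rule analysis is absorbed into the citation of Proposition \ref{minentry}), while the paper's exposes the per-iteration mechanics; both collapse to the identical threshold since $(1-B(h_{\min}))(1-\eta_{h_{\min},k}) = 1-\eta_{h_{\min},k}-\eta'_{h_{\min},k}$. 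One shared caveat: both arguments implicitly use $\eta_{h_{\min},k}<1$ (needed for your Neumann series and for the paper's division by $1-\eta_{h_{\min},k}$), which is not literally forced by \eqref{eq:corollary_assumption} alone but is the intended reading of the hypothesis; it would be worth stating explicitly.
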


\begin{proof}
The proof of this result follows in the same fashion as the proof of \cite[Prop. 1]{OMPnoise}.  Denote the set of all selected indices in the first $t$ iterations by $\Omega_t$, and by ${\Omega_{t}^{\star}} := {\Omega_{\opt}} \backslash {\Omega_t}$ the remaining coefficients of the support yet to be identified.  We maintain the notation that the dictionary $\bD$ has columns $\bphi_i$ for $i=1, 2, \ldots d$.

  We proceed by induction on $t$, and suppose that $\bD_{\Omega_{t}} \subset \bD_{\Omega_{\opt}}$ after $t$ iterations. Define $\bP_t := \bD_{\Omega_t}\bD_{\Omega_t}^{\dagger}$. Then the residual after $t$ steps is

\begin{equation}
\br_t = (\bI - \bP_t)\by = (\bI - \bP_t)\bD \balpha + (\bI - \bP_t) \be =: \bs_t + \bn_t
\end{equation}

\noindent where $\bs_t := (\bI - \bP_t)\bD \balpha$ and $\bn_t := (\bI - \bP_t) \be$. Let

\begin{equation}
M_{t} = \max_{i \in {\Omega_t^{\star}}} \left|\bphi_i^*\bs_t\right|, \quad M_{t}' = \max_{i \in {\Omega^c_{\opt}}} \left|\bphi_i^*\bs_t\right|, \quad N_t = \max_{i} \left|\bphi_i^* \bn_t\right|.
\end{equation}

In order for OMP to select a correct index at the next iteration, we need $\max_{i \in {\Omega_t^{\star}}} \left|\bphi_i^*\br_t\right| > \max_{i \in {\Omega^c_{\opt}}} \left|\bphi_i^*\br_t\right|$.  It suffices to require that $M_{t} - M_{t}'>2N_t$ since that would imply

\begin{equation}
\max_{i \in {\Omega_t^{\star}}} \left|\bphi_i^*\br_t\right| \geq M_{t} - N_t > M_{t}' + N_t \geq \max_{i \in {\Omega^c_{\opt}}} \left|\bphi_i^*\br_t\right|.
\end{equation}

We utilize the following lemma.

\begin{lem} \cite[Proof of Theorem 4.2]{Reading5}
In our notation, $B(h_{\min}) M_{t} > M_{t}'$ for all $t$.
\end{lem}

By this lemma, $M_{t} - M_{t}' > (1-B(h_{\min})) M_{t}$. Hence it suffices to require $M_{t} > \frac{2}{1-B(h_{\min})} N_t$ for OMP to select a correct index.  Then we have by definition of $\Omega_t^{\star}$, 

\begin{equation}
M_{t} = \|\bD_{\Omega_t^{\star}}^* \bs_t\|_{\infty} = \|\bD_{\Omega_t^{\star}}^* (\bI - \bP_t) \bD \balpha\|_{\infty} = \|\bD_{\Omega_t^{\star}}^* (\bI - \bP_t) \bD_{\Omega_t^{\star}} \balpha_{\Omega_t^{\star}}\|_{\infty}.
\end{equation}

The following lemma allows us to relate the above expression to $\bD_{\Omega_{\opt}}^*\bD_{\Omega_{\opt}}$. 

\begin{lem} \label{eiglem} \cite[Lemma 5]{OMPnoise}
In our notation, $\lambda_{\min}\left(\bD_{\Omega_{\opt}}^*\bD_{\Omega_{\opt}}\right) \leq \lambda_{\min}\left(\bD_{\Omega_t^{\star}}^* (\bI - \bP_t) \bD_{\Omega_t^{\star}}\right)$.
\end{lem}

Thus we can bound, 

\begin{equation}
\|\bD_{\Omega_t^{\star}}^* (\bI - \bP_t) \bD_{\Omega_t^{\star}} \balpha_{\Omega_t^{\star}}\|_2 \geq \lambda_{\min}(\bD_{\Omega_{\opt}}^*\bD_{\Omega_{\opt}}) \|\balpha_{\Omega_{t}^{\star}} \|_2.
\end{equation}

Combining this with the fact that $|\Omega_t^{\star}| = k-t$, we have

\begin{equation}
M_{t} \geq \frac{1}{\sqrt{k-t}} \|\bD_{\Omega_t^{\star}}^* \bs_t\|_2 = \frac{1}{\sqrt{k-t}}\|\bD_{\Omega_t^{\star}}^* (\bI - \bP_t) \bD_{\Omega_t^{\star}} \balpha_{\Omega_t^{\star}}\|_2 \geq \frac{\lambda_{\min}(\bD_{\Omega_{\opt}}^*\bD_{\Omega_{\opt}})}{\sqrt{k-t}} \|\balpha_{\Omega_{t}^{\star}} \|_2.
\end{equation}

Therefore, for the sufficient condition $M_{t} - M_{t}'>2N_t$ to hold, it is enough that

\begin{equation}
\|\balpha_{\Omega_{t}^{\star}} \|_2 > \frac{2 \sqrt{k-t} N_t}{(1-B(h_{\min})) \lambda_{\min}(\bD_{\Omega_{\opt}}^*\bD_{\Omega_{\opt}})}.
\end{equation}

Since $\|\be\|_2 \leq \varepsilon$, 

\begin{equation}
\|\bn_t\|_2 = \|(\bI - \bP_t) \be \|_2 \leq \|\be\|_2 \leq \varepsilon.
\end{equation}

Thus for any column $\bphi_i$ of $\bD$, 

\begin{equation}
|\bphi_i^* \bn_t | \leq \|\bphi_i\|_2 \|\bn_t\|_2 \leq \varepsilon,
\end{equation}
which implies that $N_t \leq \varepsilon$.  
Since $\lambda_{\min}(\bD_{\Omega_{\opt}}^*\bD_{\Omega_{\opt}}) \geq 1 - \eta_{h_{\min},k}$ by Lemma~\ref{D_RIP}, we have that

\begin{equation}\label{alph}
\|\balpha_{\Omega_{t}^{\star}} \|_2 \geq \frac{2\sqrt{k-t} \varepsilon}{(1-B(h_{min}))(1-\eta_{h_{\min},k})} = \frac{2\sqrt{k-t} \varepsilon}{(1-\eta_{h_{\min},k}-\eta'_{h_{\min},k})}
\end{equation}
suffices to ensure that a correct index will be selected at this step.  Observing that $|\Omega_{t}^{\star}| = k-t$ yields the desired condition \eqref{eq:n3}.

It remains to argue that the stopping criterion is sufficient for termination.  It suffices to show that at each step OMP doesn't stop early -- that with this stopping criterion it runs the full $k$ iterations. Equivalently, we want to show that for $t<k$, $\|\br_t\|_2 > \varepsilon$.  We observe that

\begin{equation}
\begin{split}
\|\br_t\|_2 &= \|(\bI - \bP_t)\bD \balpha + (\bI - \bP_t)\be \|_2 \\
&\geq \|(\bI - \bP_t)\bD \balpha\|_2 - \|(\bI - \bP_t) \be\|_2\\
&\geq \|(\bI - \bP_t)\bD_{\Omega_{t}^{\star}} \balpha_{\Omega_{t}^{\star}}\|_2 - \varepsilon.
\end{split}
\end{equation}

Again using Lemma \ref{eiglem} along with \eqref{alph}, we have for $t \leq k$, 

\begin{align*}
\|(\bI - \bP_t)\bD_{\Omega_{t}^{\star}} \balpha_{\Omega_{t}^{\star}}\|_2 &\geq \lambda_{\min}(\bD_{\Omega_{\opt}}^*\bD_{\Omega_{\opt}}) \|\balpha_{\Omega_{t}^{\star}}\|_2 \\
&\geq \lambda_{\min}(\bD_{\Omega_{\opt}}^*\bD_{\Omega_{\opt}}) \frac{2 \sqrt{k-t}\varepsilon}{(1-B(h_{\min})) \lambda_{\min}(\bD_{\Omega_{\opt}}^*\bD_{\Omega_{\opt}})} \\
&= \frac{2 \sqrt{k-t}\varepsilon}{(1-B(h_{\min})) }
> 2 \varepsilon.
\end{align*}

Thus, until $|\Omega_{t}^{\star}| = k - t = 0$ (i.e. until we have identified all of the support), as desired we have

\begin{equation}
\|\br_t\|_2 > 2 \varepsilon - \varepsilon = \varepsilon.
\end{equation}

\end{proof} 

We again numerically display the bounds required by Theorem \ref{OMP_withnoise}.  For dimensions $n=256,~d=1024$, noise level $\varepsilon = 10^{-3}$, and again using the same bounds on $\eta_{h_{\min},k}$ and $\eta_{h_{\min},k}'$ as described above, we empirically show several bounds with increasing sparsity levels in Figure~\ref{fig:noise}. 
\begin{figure}[ht]
\centering
{{\includegraphics[scale=.4]{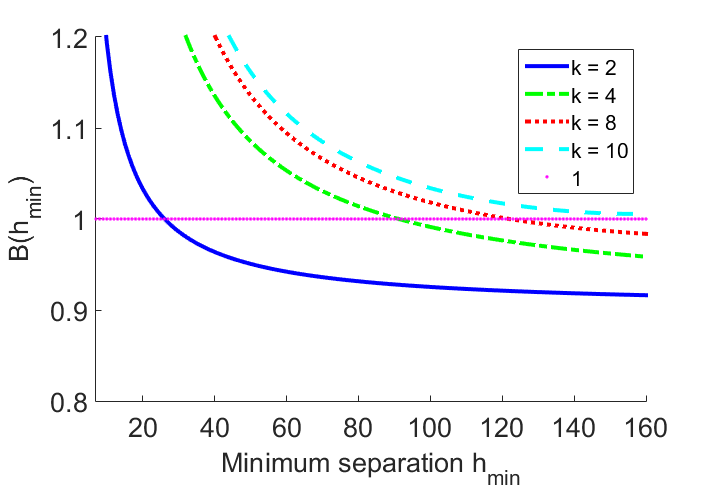}} } \quad {{\includegraphics[scale=.4]{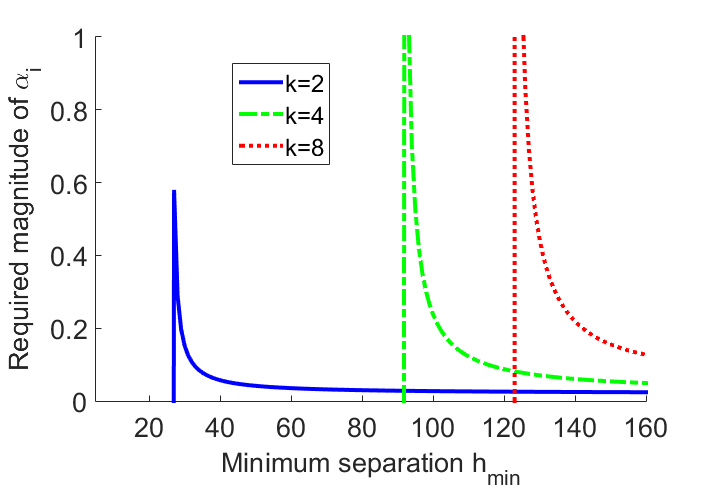}} }
\caption{Dictionary is $256\times 1024$ DFT.  Left: Plot of upper bound on $B(h_{\min})$ vs minimum separation for various sparsity levels.  When the curve falls below $1$ (dotted line), Theorem \ref{OMP_withnoise} guarantees that OMP provides accurate recovery.  Right: Lower bound on non-zeros in signal $\balpha$ as in Theorem \ref{OMP_withnoise} when noise is bounded by $10^{-3}$ as a function of minimum separation $h_{\min}$.\label{fig:noise}}

\end{figure}

\subsection{{SSCoSaMP(L1)}}

We conclude with a simple argument showing that the $\ell_1$-minimization method can provide a set of good near-optimal projection components $\cS_D (\bx,k)$ even if $\bD$ is an overcomplete dictionary (partial DFT again for simplicity) under the condition that the signal is well-separated.  
We will utilize an immediate corollary of \cite{Reading2}.

\begin{cor} \label{cor1}  \cite{Reading2}
Consider the program
\begin{equation} \label{eq:eq16}
\argmin_{\tilde{\bx}} \| \tilde{\bx} \|_1 \text{   subject to    } \bF_n \tilde{\bx} = \bF_n {\bx},
\end{equation}
 where $\bF_n$ is the partial Fourier matrix in a low-pass band $[-f_{lo},f_{lo}]$.
Then if T $\subset$ $\{0,1,...,N-1\}$ is the support of $\{\bx_t\}^{N-1}_{t = 0}$ obeying

\begin{equation} \label{eq:eq15}
\min_{t,t' \in T: t \not = t'} \frac{1}{N} |t - t'| \geq  2/f_{lo},
\end{equation}
the solution to \eqref{eq:eq16} is exact, $\tilde{\bx} = \bx$.

\end{cor}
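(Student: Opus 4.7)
The plan is to reduce the discrete $\ell_1$ statement to the continuous super-resolution theorem of \cite{Reading2}, whose natural habitat is atomic measures on the circle rather than vectors in $\C^N$. The bridge is the embedding that identifies a discrete signal $\bx \in \C^N$ with the atomic measure $\mu_{\bx} := \sum_{t=0}^{N-1} \bx_t \, \delta_{t/N}$ on the torus $\mathbb{T} = [0,1)$. Under this identification, the total variation norm satisfies $\|\mu_{\bx}\|_{\mathrm{TV}} = \|\bx\|_1$, and the partial Fourier measurements $\bF_n \bx$ coincide (up to normalization) with the low-pass Fourier coefficients $\widehat{\mu_{\bx}}(k)$ for $|k|\leq f_{lo}$.

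First I would check that the separation hypothesis transfers correctly. The circular separation of the embedded support $\{t/N : t \in T\}$ is exactly $\min_{t\neq t'\in T} |t-t'|/N$, which by \eqref{eq:eq15} is at least $2/f_{lo}$. This is precisely the minimum-separation condition $\Delta(T)\geq 2/f_c$ required by the main theorem of \cite{Reading2} with $f_c = f_{lo}$. Invoking that theorem yields that $\mu_{\bx}$ is the unique minimizer of the TV norm over all complex Borel measures on $\mathbb{T}$ whose low-pass Fourier coefficients match those of $\mu_{\bx}$.

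To conclude, consider any feasible $\tilde{\bx}$ for \eqref{eq:eq16}. Its image $\mu_{\tilde{\bx}}$ is a grid-supported atomic measure with the same low-pass Fourier data as $\mu_{\bx}$, and $\|\mu_{\tilde{\bx}}\|_{\mathrm{TV}}=\|\tilde{\bx}\|_1$. Since $\mu_{\bx}$ is the unique TV minimizer over the larger continuous feasible set, it is in particular the unique minimizer among grid-supported measures; pulling back through the embedding (using linear independence of $\{\delta_{t/N}\}_{t=0}^{N-1}$) gives $\tilde{\bx} = \bx$, which is the claim.

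The step I expect to require the most care is bookkeeping the conventions: matching the indexing and normalization of $\bF_n$ to those of the theorem as stated in \cite{Reading2} (integer versus half-integer cutoff, the band $\{-f_{lo},\dots,f_{lo}\}$ in the continuous sense versus the partial DFT of an $N$-vector, and the measurement of distance on the circle). Once these conventions are aligned the result truly is an immediate corollary, since the grid embedding preserves both the objective and the constraint and the continuous theorem provides uniqueness over the strictly larger feasible set.
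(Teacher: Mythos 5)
Your proposal is correct and is exactly the standard derivation: the paper offers no proof of its own here, deferring entirely to \cite{Reading2}, where this discrete statement is obtained from the continuous total-variation super-resolution theorem by precisely the grid-embedding $\bx \mapsto \sum_t \bx_t \delta_{t/N}$ you describe, with uniqueness over the larger continuous feasible set pulling back to the discrete program. The one convention to fix is that the separation in \cite{Reading2} is the wrap-around distance $\min(|t-t'|, N-|t-t'|)/N$ rather than $|t-t'|/N$ --- a point you already flag and which the paper itself glosses over.
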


In our context, the overcomplete dictionary $\bD$ can only collect the lowest $2 f_{lo} +1$ frequencies. 
When the dictionary $\bD$ has dimension $n \times d$ with $n<d$, $\bD\bx$ only gives the spectrum of $\bx$ in the frequency range $[-\frac{n}{2}, \frac{n}{2}]$. Here then, $\frac{n}{2} = f_{lo}$. Therefore $\bD$ acts as a low-pass filter in our problem and we seek to recover $\balpha$ by only knowing parts of the spectrum of $\balpha$.  We can thus translate the above result to the context of SSCoSaMP projections.

\begin{cor} 
Let $\bD$ be an $n \times d$ ($n \leq d$) overcomplete DFT dictionary that gives the Fourier transform of a vector in the frequency domain $[-f_{lo},f_{lo}] := [-n/2, n/2]$. Suppose $\bx$ has a $k$-sparse expansion in $\bD$, i.e. $\bx = \bD \balpha$ where $\balpha$ has support $T$ with $|T|\leq k$.  If the support obeys  

\begin{equation}\label{eq:eq16.3}
\min_{t,t' \in T: t \not = t'} \frac{1}{d} |t - t'| \geq 2/f_{lo}
\end{equation}

\noindent then the solution to 

\begin{equation}\label{eq:eq16.7}
\argmin_{\hat{\balpha}} \|\hat{\balpha}\|_1 \text{    subject to    }  \bD \hat{\balpha} = \bx
\end{equation}

\noindent is exact ($\hat{\balpha} = \balpha$) and can thus be used as a projection within SSCoSaMP.

\end{cor}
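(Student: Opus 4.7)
The plan is to reduce this corollary to a direct application of Corollary \ref{cor1}, since the $n\times d$ partial DFT dictionary $\bD$ defined in \eqref{eq:e10} is, up to a normalization constant, exactly a partial Fourier operator $\bF_n$ whose low-pass band is $[-n/2, n/2]$. The work of the proof is therefore primarily a bookkeeping exercise that aligns the two notational conventions and confirms that the separation hypothesis \eqref{eq:eq16.3} coincides with the hypothesis \eqref{eq:eq15} of Corollary \ref{cor1} under the substitution $N = d$ and $f_{lo} = n/2$.

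First I would observe that for $\balpha \in \C^d$, the entries of $\bD\balpha$ are (up to the factor $1/\sqrt{n}$) exactly the Fourier coefficients $\sum_k \balpha_k \omega^{jk}$ of the length-$d$ vector $\balpha$ at the $n$ frequencies $j = 0,1,\ldots,n-1$. Modulo a cyclic reindexing of the frequencies so that the collected band is centered at zero, this is precisely the low-pass partial Fourier operator $\bF_n$ appearing in Corollary \ref{cor1} with ambient dimension $N = d$ and cutoff $f_{lo} = n/2$. The overall scaling by $1/\sqrt{n}$ is irrelevant for the $\ell_1$ program \eqref{eq:eq16.7} since multiplying the equality constraint by a nonzero constant leaves the feasible set and hence the minimizer unchanged.

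Next I would verify that the separation hypothesis translates correctly: substituting $N = d$ and $f_{lo} = n/2$ into \eqref{eq:eq15} yields exactly the condition $\min_{t\neq t'\in T} \frac{1}{d}|t - t'| \geq 2/f_{lo}$ of \eqref{eq:eq16.3}. Applying Corollary \ref{cor1} to the length-$d$ signal $\balpha$ with support $T$ then guarantees that \eqref{eq:eq16.7}, which in this reparameterization is the program \eqref{eq:eq16}, has $\balpha$ as its unique minimizer. It follows that the $\ell_1$ decoder recovers $\balpha$ exactly from $\bx = \bD\balpha$.

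The only step requiring minor care is to ensure the cyclic distance used in the definition of ``well-separated" (and measured modulo $d$, as noted in the footnote following the well-separated definition) matches the distance used in Corollary \ref{cor1}. Since \cite{Reading2} measures separation cyclically on the discrete circle of length $N$, with $N = d$ this is exactly the same distance appearing in \eqref{eq:eq16.3}. Once this identification is made, the corollary follows with no further computation; in particular, since $\hat{\balpha} = \balpha$ exactly, one may take $\cS_{\bD}(\bx,k)$ to equal the true support $T$, so $\ell_1$-minimization yields an exact (and therefore near-optimal in the sense of \eqref{eq:eq2}) projection for use within SSCoSaMP.
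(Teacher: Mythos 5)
Your proposal is correct and is essentially the paper's own argument: the paper likewise proves this corollary by identifying $\bD$ with the low-pass partial Fourier operator $\bF_n$ of Corollary \ref{cor1} under the substitutions $N = d$ and $f_{lo} = n/2$, and then invoking that result directly. Your additional remarks on the $1/\sqrt{n}$ normalization and the cyclic frequency reindexing only make explicit the bookkeeping that the paper leaves implicit.
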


This result provides theoretical backing to the behavior observed by SSCoSaMP($\ell_1$) in Figure \ref{sepvclust}.

\section{Conclusion}\label{sec:conc}

In this work we study the behavior of several classical CS methods as approximate projections within a greedy signal space method.  It has been observed that the accuracy of the methods in this setting depends heavily on the signal structure, even though their classical recovery guarantees are independent of signal structure.  Here, we analyze the behavior of these approaches when the dictionary may be highly overcomplete and thus does not satisfy typical properties like the RIP or incoherence.  We prove that two of the methods, under specific assumptions on the signal structure,  can be used as accurate approximate projections.  Our analysis thus provides theoretical backing to explain the observed phenomena.  It would be useful future work to study additional methods, like the CoSaMP method which we conjecture can serve as an approximate projection when the signal has a clustered support.


\bibliographystyle{myalpha}

\bibliography{../../../../bib}

\end{document}